\definecolor{vegasgold}{rgb}{0.77, 0.7, 0.35}
\definecolor{darkgoldenrod}{rgb}{0.72, 0.53, 0.04}
\definecolor{gold(metallic)}{rgb}{0.83, 0.69, 0.22}
\DeclareFontFamily{U}{wncy}{}
\DeclareFontShape{U}{wncy}{m}{n}{<->wncyr10}{}
\DeclareSymbolFont{mcy}{U}{wncy}{m}{n}
\DeclareMathSymbol{\Sh}{\mathord}{mcy}{"58}
\newtheorem{Theorem}{Theorem}[section]
\newtheorem{Lemma}[Theorem]{Lemma}
\newtheorem{Conjecture}[Theorem]{Conjecture}
\newtheorem{Corollary}[Theorem]{Corollary}
\newtheorem{Definition}[Theorem]{Definition}
\newtheorem{Assumption}[Theorem]{Assumption}
\numberwithin{equation}{section}
\theoremstyle{remark}
\newtheorem{Remark}[Theorem]{Remark}
\newcommand{\Zx}{\mathbb{Z}_p\llbracket x\rrbracket}
\newcommand{\cF}{\mathcal{F}}
\newcommand{\Seln}{\op{Sel}_{p^\infty}(E/F^{(n)}_{\op{cyc}})}
\newcommand{\Z}{\mathbb{Z}}
\newcommand{\Q}{\mathbb{Q}}
\newcommand{\F}{\mathbb{F}}
\newcommand{\Fin}{F_\infty}
\newcommand{\Fcyc}{F_{\op{cyc}}}
\newcommand{\op}[1]{\operatorname{#1}}
\newcommand\mtx[4] { \left( {\begin{array}{cc}
 #1 & #2 \\
 #3 & #4 \\
 \end{array} } \right)}
\title[Mordell--Weil ranks in noncommutative towers]{Asymptotic growth of Mordell--Weil ranks of elliptic curves in noncommutative towers}
\author[Anwesh Ray]{Anwesh Ray}
\address[Ray]{Department of Mathematics\\
University of British Columbia\\
Vancouver BC, Canada V6T 1Z2}
\email{anweshray@math.ubc.ca}
\begin{document}

\maketitle

\begin{abstract}

Let $E$ be an elliptic curve defined over a number field $F$ with good ordinary reduction at all primes above $p$, and let $F_\infty$ be a finitely ramified uniform pro-$p$ extension of $F$ containing the cyclotomic $\Z_p$-extension $F_{\op{cyc}}$. Set $F^{(n)}$ be the $n$-th layer of the tower, and $F^{(n)}_{\op{cyc}}$ the cyclotomic $\Z_p$-extension of $F^{(n)}$. We study the growth of the rank of $E(F^{(n)})$ by analyzing the growth of the $\lambda$-invariant of the Selmer group over $F^{(n)}_{ \op{cyc}}$ as $n\rightarrow \infty$. This method has its origins in work of A.~Cuoco, who studied $\Z_p^2$-extensions. Refined estimates for growth are proved that are close to conjectured estimates. The results are illustrated in special cases.
\end{abstract}

\section{Introduction}
\par The \emph{Mordell--Weil Theorem} states that given an elliptic curve $E$ defined over a number field $F$, its $F$-rational points form a finitely generated abelian group, i.e.,
\[
E(F) \simeq \Z^r \oplus E(F)_{\op{tors}}
\]
where $r$ is a non-negative integer called the Mordell--Weil \emph{rank}. In \cite{mazur1972rational}, B.~Mazur initiated the study of Selmer groups of elliptic curves in $\Z_p$-extensions. A major application of Iwasawa theory is the study of the growth of Mordell--Weil ranks of abelian varieties in towers of number fields. Given an abelian variety defined over a number field $F$ with good ordinary reduction at the primes above $p$, Mazur showed that the rank of $A$ is bounded in the cyclotomic $\Z_p$-extension of $F$. K.~Kato and D.~Rohlrich proved the analogous statement for all elliptic curves defined over abelian number fields, see \cite{kato2004p, rohrlich1984onl}. Alongside further developments in Iwasawa theory over larger $p$-adic Lie extensions, there has been significant interest in analyzing the asymptotic growth of Mordell--Weil ranks in towers. For ordinary primes, Mazur in \cite{mazur1983modular} formulated a precise conjecture on the growth of ranks in any $\Z_p$-extension of an imaginary quadratic field $F$, called the \emph{Growth number conjecture}. This question has been studied in anticyclotomic extensions by C.~Cornut \cite{cornut2002mazur} and V.~Vatsal \cite{vatsal2003special}. A prototypical example of interest is the $\Z_p^2$-extension $F_\infty$ of an imaginary quadratic field $F$. For every integer $n\geq 1$, set $F^{(n)}$ to denote the $n$-th layer. In this setting, it is the extension contained in $F_\infty$ such that $\op{Gal}(F^{(n)}/F)=\left(\Z/p^n\Z\right)^2$. For elliptic curves $E_{/F}$, asymptotic formulas for the growth of the rank of $E(F^{(n)})$ as $n\rightarrow \infty$ have been proven by A.~Lei and F.~Sprung in \cite{lei2020ranks}. More recently, such growth questions are studied in admissible uniform pro-$p$ extensions of number fields by D.~Delbourgo and A.~Lei in \cite{delbourgo2017estimating}, and by P.~C.~Hung and M.~F.~Lim in \cite{HL}.

\par In this note, we employ a new strategy to study the growth of ranks in non-commutative towers. Let $F^{(n)}_{\op{cyc}}$ be the cyclotomic $\Z_p$-extension of $F^{(n)}$. We study the growth of the rank of $E(F^{(n)})$ by analyzing the growth of the $\lambda$-invariant of the Selmer group over $F^{(n)}_{ \op{cyc}}$ as $n\rightarrow \infty$ using a generalizations of Kida's formula due to Y.~Hachimori $\&$ K.~Matsuno \cite{HM}, and M.~F.~Lim \cite{lim2015remark}. The method has several advantages. First, it is a straightforward application of Kida's formula which a prioi allows one to circumvent technicalities of noncommutative Iwasawa theory. In other words, the proof is short and can be understood without familiarity with methods in noncommutative Iwasawa theory, though it does build on previous work of M.~F.~Lim \cite{lim2021some} which uses nontrivial results in the subject. Given an elliptic curve $E$ over a number field $F$, we do however require that the Selmer group of $E$ over the cyclotomic $\Z_p$-extension of $F$ be cotorsion over the Iwasawa algebra and impose the $\mathfrak{M}_H(G)$ conjecture (see \cite{CS12,lim2015remark}).
\par Secondly, (and perhaps more importantly) the method strengthens known results and these estimates are closer to conjectured asymptotics. The error term in the asymptotic formulas of Hung-Lim are removed in the process, and the main term is sharper (see Remark \ref{better estimate} for further details). The significance of this is illustrated for certain examples, namely, \emph{$\Z_p^d$-extensions}, \emph{false Tate-curve extensions} and \emph{trivializing extensions} generated by the $p$-primary torsion of a non-CM elliptic curve. It should be pointed out here that for $\Z_p^2$-extensions, a similar question was studied by A.~Cuoco in \cite{cuoco1980growth}, who studied the growth of Iwasawa invariants associated with class group towers in families of $\Z_p$-extensions contained in the composite of two $\Z_p$-extensions. The results can also be applied to prove statistical results.
\begin{enumerate}
    \item In Corollary \ref{100 percent}, it is shown that if $E$ is an elliptic curve defined over an imaginary quadratic field $F$ such that $E$ does not have complex multiplication and $\op{rank}E(F)=0$, then the rank remains 0 is $100\%$ of $\Z_p^2$-extensions of $F$.
    \item In Corollary $\ref{2ndapp}$, we consider the curve $E=\href{https://www.lmfdb.org/EllipticCurve/Q/11a2/}{11a2}$. It is shown that there is a positive density set of primes $\ell$ such that \[\op{rank}E\left(\Q(\mu_{7^{n+1}}, \ell^{\frac{1}{7^n}})\right)\leq \lambda_7(E/\Q(\mu_{7^\infty}))7^n,\] for all integers $n>0$.
\end{enumerate}
\par The method employed in this paper shows that in any context in which a satisfactory generalization of Kida's formula is proved, it should be possible to analyze the growth of $\lambda$-invariants in noncommutative towers. We point out that analogs of Kida's formula have been proven for fine Selmer groups by D.~Kundu in \cite{kundu2021analogue}. In this particular context, the number fields are assumed to be totally real. Also, such results were proved by J.~Hatley and A.~Lei in the supersingular setting, see \cite{hatley2019arithmetic}.
\subsection*{Acknowledgement} The author is grateful to Debanjana Kundu for pointing out an inaccuracy in the previous draft. He would also like to thank the anonymous referee for a timely and thorough reading of the manuscript and for suggestion various pertinent corrections.

\section{Growth of Iwasawa invariants in towers}
\label{section: growth}
\par In this section, we introduce some preliminary notions and prove the main result of this paper. 

\subsection{Uniform pro-$p$ extensions}
Throughout, $p$ will be a prime $\geq 5$ and $F$ a number field. Let $F_\infty$ be an infinite Galois extension of $F$ with pro-$p$ Galois group $G:=\op{Gal}(F_\infty/F)$. The lower central $p$-series of $G$ is recursively defined as follows: \[G_0:=G\text{ and }G_{n+1}:=G_n^p[G_n, G].\]
\begin{Definition}
The group $G$ is said to be \emph{uniform} if 
\begin{enumerate}
    \item it is finitely generated, 
    \item it is \emph{powerful}, i.e., $[G,G]\subseteq G^p$, 
    \item $[G_n:G_{n+1}]=[G:G_1]$ for all $n\in \Z_{\geq 1}$.
\end{enumerate}
\end{Definition}
Setting $d:=[G:G_1]$, we observe that $[G:G_n]=p^{dn}$. Assume that $G$ has the structure of a $p$-adic Lie group. We say that $F_\infty$ is a \emph{strongly admissible} $p$-adic Lie extension if
\begin{enumerate}
    \item only finitely many primes ramify in $\Fin$,
    \item $\Fin$ contains the cyclotomic $\Z_p$-extension $\Fcyc$ of $F$,
    \item the $p$-torsion subgroup of $G$ is trivial.
    \end{enumerate}
We assume that $F_\infty$ is pro-$p$ a strongly admissible $p$-adic Lie extension and $G$ is uniform.
Note that $G_{n}/G_{n+1}\simeq \left(\Z/p\Z\right)^d$ for $n\in \Z_{\geq 1}$. It is well known that the dimension of $G$ is equal to $d$ and that $G_n=G^{p^n}$, see \cite[Theorem 3.6]{DS}. The extension $\Fin$ is filtered by a tower of number fields. Setting $F^{(n)}:=\Fin^{G_n}$, consider the nonabelian tower
\[F=F^{(0)}\subset F^{(1)}\subset \dots \subset F^{(n)}\subset \dots, \]
and let $F^{(n)}_{\op{cyc}}$ be the cyclotomic $\Z_p$-extension of $F^{(n)}$. We have thus filtered the extension $F_\infty$ into a tower of cyclotomic $\Z_p$-extensions
\[F_{\op{cyc}}=F^{(0)}_{\op{cyc}}\subset F^{(1)}_{\op{cyc}}\subset \dots \subset F^{(n)}_{\op{cyc}}\subset \dots.\]Set $H:=\op{Gal}(\Fin/F_{\op{cyc}})$ and $\Gamma:=G/H\simeq \Z_p$. For $n\in \Z_{\geq 1}$, we write $H_n$ (resp. $\Gamma_n$) for the descending central series of $H$ (resp. $\Gamma$). We list a few useful facts.
\begin{Lemma}\label{H is uni}
The following assertions hold:
\begin{enumerate}
    \item The normal subgroup $H$ is uniform with $(d-1)$ generators, and $H_n$ is identified with $H\cap G_n$,
    \item $\Gamma_n$ is identified with $G_n/H_n$.
\end{enumerate}
\end{Lemma}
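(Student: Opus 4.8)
The plan is to obtain uniformity of $H$ from the torsion-freeness of the quotient $\Gamma = G/H \simeq \Z_p$, and then to extract both identifications from the description of the lower $p$-central series of a uniform pro-$p$ group as the descending chain of its $p^n$-th powers.

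First I would record that, since $H$ is a closed subgroup of the $p$-adic analytic group $G$, it is itself $p$-adic analytic, hence topologically finitely generated, with $\dim H = \dim G - \dim \Gamma = d-1$; and as a subgroup of the torsion-free group $G$ it is torsion-free. Using the standard criterion that a finitely generated, torsion-free, powerful pro-$p$ group is uniform (which applies here because $p$ is odd; see \cite{DS}), it then remains only to show that $H$ is powerful.

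The key point, which also yields the two identifications, is the equality
\[ H \cap G^{p^n} = H^{p^n} \quad \text{for all } n \ge 0. \]
The inclusion $H^{p^n} \subseteq H \cap G^{p^n}$ is immediate. For the reverse, I would use that in a powerful (in particular uniform) pro-$p$ group every element of $G^{p^n}$ is a genuine $p^n$-th power (see \cite{DS}): an $h \in H \cap G^{p^n}$ can be written $h = g^{p^n}$ with $g \in G$, and its image $\bar g$ in $\Gamma$ then satisfies $\bar g^{p^n} = 1$, forcing $\bar g = 1$ since $\Gamma \simeq \Z_p$ is torsion-free, so that $g \in H$ and $h \in H^{p^n}$. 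Taking $n = 1$ gives $[H,H] \subseteq [G,G] \cap H \subseteq G^p \cap H = H^p$, so $H$ is powerful and hence uniform; since for a uniform pro-$p$ group the minimal number of topological generators equals the dimension, $H$ has $d-1$ generators. Finally, because $H$ and $G$ are uniform we have $H_n = H^{p^n}$ and $G_n = G^{p^n}$ by \cite[Theorem 3.6]{DS}, and the displayed equality then reads $H_n = H \cap G_n$, giving part (1).

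For part (2), I would apply the projection $\pi \colon G \twoheadrightarrow \Gamma$: being surjective, it maps the set of $p^n$-th powers of $G$ onto the set of $p^n$-th powers of $\Gamma$, i.e. $\pi(G_n) = \pi(G^{p^n}) = \Gamma^{p^n} = \Gamma_n$, while $\ker(\pi|_{G_n}) = G_n \cap H = H_n$ by part (1); hence $\pi$ induces an isomorphism $G_n/H_n \simeq \Gamma_n$. I expect the only step needing genuine care to be the verification that $H$ is uniform — more precisely, invoking the implication ``$G/H$ torsion-free $\Rightarrow$ $H$ powerful, hence uniform'' with the correct hypotheses, which is where $p \ge 5$ and the structure theory of \cite{DS} enter; everything else is formal bookkeeping with $p^n$-th powers and the exact sequence $1 \to H \to G \to \Gamma \to 1$.
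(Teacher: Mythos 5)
Your argument is correct: the identity $H\cap G^{p^n}=H^{p^n}$, obtained from torsion-freeness of $\Gamma\simeq\Z_p$ together with the fact that in a uniform group every element of $G^{p^n}$ is an actual $p^n$-th power, gives powerfulness (hence uniformity, $p$ being odd) of $H$, the count of $d-1$ generators via dimension additivity, and both identifications $H_n=H\cap G_n$ and $\Gamma_n\simeq G_n/H_n$. The paper offers no argument of its own, deferring entirely to \cite[Theorem 3.6]{DS} and \cite[Lemma 2.6]{HL}, and your write-up is essentially the standard proof underlying those citations, just made explicit.
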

\begin{proof}
 See \cite[Theorem 3.6]{DS} and \cite[Lemma 2.6]{HL} for further details.
\end{proof}
As a result, we have that $F^{(n)}_{\op{cyc}}=\Fin^{H_n}$ and hence, \[\op{Gal}(F^{(n+1)}_{\op{cyc}}/F^{(n)}_{\op{cyc}})=H_n/H_{n+1}\simeq \left(\Z/p\Z\right)^{d-1}.\] Since $\Gamma_n=G_n/H_n$, we have that $\Gamma_n=\op{Gal}(F^{(n)}_{\op{cyc}}/F^{(n)})$. We now introduce the Iwasawa-algebra at the $n$-th level, taken to be
\[\Lambda(\Gamma_n):=\varprojlim_L \Z_p[\op{Gal}(L/F^{(n)})],\] where $L$ ranges over all number fields contained in between $F^{(n)}$ and $F^{(n)}_{\op{cyc}}$. Choose a topological generator $\gamma_n$ of $\Gamma_n$ and fix the isomorphism $\Lambda(\Gamma_n)\simeq \Zx$ sending $\gamma_n-1$ to $x$. 
\par More generally, if $\mathcal{G}$ is any pro-$p$ group, set \[\Lambda(\mathcal{G}):=\varprojlim_U \Z_p[\mathcal{G}/U],\] where $U$ ranges over all finite index normal subgroups of $\mathcal{G}$. Given a number field $F$, we set $\Lambda_F:=\Lambda \left(\op{Gal}(F_{\op{cyc}}/F)\right)$.

\subsection{Iwasawa invariants}
\par Let $M$ be a cofinitely generated cotorsion $\Z_p\llbracket x\rrbracket$-module, i.e., the Pontryagin-dual $M^{\vee}:=\op{Hom}(M, \Q_p/\Z_p)$ is a finitely generated and torsion $\Z_p\llbracket x\rrbracket$-module. Recall that a polynomial $f(x)\in \Zx$ is said to be \textit{distinguished} if it is a monic polynomial whose non-leading coefficients are all divisible by $p$. Note that all height $1$ prime ideals of $\Z_p\llbracket x\rrbracket$ are principal ideals $(a)$, where $a=p$ or $a=f(x)$, where $f(x)$ is an irreducible distinguished polynomial. 
According to the structure theorem for $\Zx$-modules (see \cite[Theorem 13.12]{washington1997}), $M^{\vee}$ is pseudo-isomorphic to a finite direct sum of cyclic $\Zx$-modules, i.e., there is a map
\[
M^{\vee}\longrightarrow \left(\bigoplus_{i=1}^s \Zx/(p^{\mu_i})\right)\oplus \left(\bigoplus_{j=1}^t \Zx/(g_j^{e_j}(x)) \right)
\]
with finite kernel and cokernel.
Here, $\mu_i>0$, $e_j>0$ and $g_j(x)$ is an irreducible distinguished polynomial. Furthermore, the numbers $\mu_1, \dots, \mu_s$ and irreducible distinguished polynomials $g_1(x),\dots, g_t(x)$ are uniquely determined.
The characteristic ideal of $M^\vee$ is (up to a unit) generated by
\[
f_{M}^{(p)}(x) = f_{M}(x) := p^{\sum_{i} \mu_i} \prod_j g_j^{e_j}(x).
\]
The $\mu$-invariant of $M$ is defined as the power of $p$ in $f_{M}(x)$.
More precisely,
\[
\mu_p(M):=\begin{cases}
\sum_{i=1}^s \mu_i & \textrm{ if } s>0\\
0 & \textrm{ if } s=0.
\end{cases}
\]
The $\lambda$-invariant of $M$ is the degree of the characteristic element, i.e.,
\[
\lambda_p(M) :=\begin{cases}
\sum_{i=1}^t e_i\deg g_i & \textrm{ if } s>0\\
0 & \textrm{ if } s=0.
\end{cases}
\]
Since the numbers $\mu_i$ and polynomials $g_j(x)$ are uniquely determined by $M$, the $\mu$ and $\lambda$-invariants determined above are well defined. 
\par Let $F$ be a number field and $E$ an elliptic curve over $F$ with good ordinary reduction at all primes of $F$ above $p$. Denote by $\op{Sel}_{p^\infty}(E/F_{\op{cyc}})$ the $p$-primary Selmer group of $E$ over $F_{\op{cyc}}$ (see \cite{coates2000galois} for further details). Suppose $\op{Sel}_{p^\infty}(E/F_{\op{cyc}})$ that is a cotorsion $\Lambda_F$-module, we set $\mu_p(E/F)$ and $\lambda_p(E/F)$ to denote the $\mu$ and $\lambda$-invariant of $\op{Sel}_{p^\infty}(E/F_{\op{cyc}})$ respectively, when viewed as a module over $\Lambda_F$. We fix a strongly admissible pro-$p$, uniform, $p$-adic Lie extension $F_\infty/F$ and let $\op{Sel}_{p^\infty}(E/F_\infty)$ be the Selmer group of $E$ over $F_\infty$ (see \cite{lim2021some} for the definition). Throughout, we make the following assumption.
\begin{Assumption}\label{ass}
With notation as above, assume that $\op{Sel}_{p^\infty}(E/F_\infty)$ satisfies the $\mathfrak{M}_H(G)$-conjecture. In greater detail, set $X(E/F_\infty)$ to be the Pontryagin dual of $\op{Sel}_{p^\infty}(E/F_\infty)$. We assume that 
\[X_f(E/F_\infty):=\frac{X(E/F_\infty)}{X(E/F_\infty)[p^\infty]}\] is finitely generated as a $\Lambda(H)$-module.
\end{Assumption}

\subsection{An analogue of Kida's formula}
\par Hachimori and Matsuno in \cite{HM} proved an analogue of Kida's formula for Selmer groups of elliptic curves. We recall this result and a refinement due to Lim. Let $F$ be a number field and $E_{/F}$ an elliptic curve. Let $L/F$ be a finite Galois extension such that $\op{Gal}(L/F)$ is a $p$-group. Let $P_1(E, L_{\op{cyc}})$ (resp. $P_2(E, L_{\op{cyc}})$) be the set of primes $\eta\nmid p$ of $L_{\op{cyc}}$ that are ramified in the extension $L_{\op{cyc}}/K_{\op{cyc}}$, at which $E$ has split multiplicative reduction (resp. $E$ has good reduction and $E(L_{\op{cyc},\eta})[p]\neq 0$). Given a prime $\eta$ of $L_{\op{cyc}}$, set $e_{L_{\op{cyc}}/K_{\op{cyc}}}(\eta)$ to denote the ramification index of $\eta$ with respect to the extension $L_{\op{cyc}}/K_{\op{cyc}}$.
\begin{Theorem}[M.~F.~Lim]\label{lim kida}
Let $p\geq 5$ be a prime number, $F$ a number field and $E_{/F}$ an elliptic curve with good ordinary reduction at all primes of $F$ above $p$. Let $L/F$ be a Galois extension for which $\op{Gal}(L/F)$ is a $p$-group. Assume that there is a pro-$p$ strongly admissible $p$-adic Lie extension $F_\infty/F$ such that
\begin{enumerate}
    \item $F_\infty$ contains $L$,
    \item Assumption \ref{ass} is satisfied.
\end{enumerate}
Then, the following assertions hold
\begin{enumerate}
    \item $\op{Sel}_{p^\infty}(E/L_{\op{cyc}})$ is cotorsion over the Iwasawa algebra $\Lambda_L$,
    \item $\mu_p(E/L)=[L:K]\mu_p(E/K)$,
    \item \[\begin{split}\lambda_p(E/L)=&[L_{\op{cyc}}:K_{\op{cyc}}]\lambda_p(E/K)+\sum_{\eta\in P_1(E/L_{\op{cyc}})} \left(e_{L_{\op{cyc}}/K_{\op{cyc}}}(\eta)-1\right)\\&+2\sum_{\eta\in P_2(E/L_{\op{cyc}})} \left(e_{L_{\op{cyc}}/K_{\op{cyc}}}(\eta)-1\right).\end{split}\]
\end{enumerate}
\end{Theorem}
\begin{proof}
The result as stated follows from the results in \cite{lim2021some}, as we now explain. Note that since it is assumed that the Selmer group of $E$ over $F_\infty$ satisfies the $\mathfrak{M}_H(G)$-conjecture, it follows from \cite[Proposition 2.5]{CS12} that $\op{Sel}_{p^\infty}(E/L_{\op{cyc}})$ is cotorsion over $\Lambda_L$. First we reduce to the case when $K_{\op{cyc}}\cap L=K$, i.e., $[L:K]=[L_{\op{cyc}}:K_{\op{cyc}}]$. Letting $K'=K_{\op{cyc}}\cap L$, it is easy to see that if the result holds for the extensions $L/K'$ and $K'/K$, then it holds for $L/K$. \par First, we prove that the result holds for $K'/K$. It is a simple exercise to show that $\mu_p(E/K')=[K':K] \mu_p(E/K)$. Furthermore $\lambda_p(E/K')$ is the $\Z_p$-corank of $\op{Sel}_{p^\infty}(E/K'_{\op{cyc}})$. Since $K'$ is contained in $K_{\op{cyc}}$, we have that $K'_{\op{cyc}}=K_{\op{cyc}}$. Therefore, $\lambda_p(E/K')$ is equal to $\lambda_p(E/K)$. Thus, the result is shown to hold for $K'/K$ and it suffices to prove the result for $L/K'$. Upon replacing $K$ with $K'$, we thus reduce to the case when $[L:K]=[L_{\op{cyc}}:K_{\op{cyc}}]$. In this setting the result follows from \cite[Theorem 4.1 \& section 5]{lim2021some}. Indeed the decomposition conditions in \emph{loc. cit.} are equivalent to the conditions on $P_1$ and $P_2$.
\end{proof}

\begin{Remark}
It should be noted here that in section 5 of \cite{lim2021some} an additional assumption is made, namely that $F$ contains the $p$-th roots of unity. This assumption is in place to guarantee the existence of an admissible $p$-adic Lie extension of $L$, see Lemma 4.2 of \emph{loc. cit}. Since we have assumed that such an admissible $p$-adic Lie extension $F_\infty/L$ exists to begin with, there is no need for this additional assumption.
\end{Remark}
\subsection{Main result}
\par Let $E$ be an elliptic curve over a number field $F$ with good ordinary reduction at all primes above $p$. Throughout, we shall make the following assumption.

We introduce some further notation. Let $Q_1=Q_1(E,F_\infty)$ (resp. $Q_2=Q_2(E,F_\infty)$) be the set of primes $w\nmid p$ of $F_{\op{cyc}}$ that are ramified in $F_\infty$, at which $E$ has split multiplicative reduction (resp. $E$ has good reduction and $E(F_{\op{cyc},w})[p]\neq 0$). We stress here that $Q_1$ and $Q_2$ consist of subsets of primes of $F_{\op{cyc}}$ and not $F_\infty$. Recall that it is stipulated that only finitely many primes ramify in $F_\infty$, and since all primes are finitely decomposed in $F_{\op{cyc}}$, it follows that $Q_1$ and $Q_2$ are finite. For $i=1,2$, we set $q_i:=\#Q_i$.
\begin{Theorem}\label{premain}
Let $n$ be a positive integer. Suppose that the conditions of Assumption \ref{ass} hold, then, $\Seln$ is a cotorsion $\Lambda_{F^{(n)}}$-module, with 
\[\mu_p(E/F^{(n)})=p^{nd}\mu_p(E/F).\] Furthermore, we have that
\[p^{n(d-1)}\lambda_p(E/F)\leq \lambda_p(E/F^{(n)})\leq  p^{n(d-1)}\lambda_p(E/F)+\left(p^{n(d-1)}-p^{n(d-2)}\right)(q_1+2q_2).\]

\end{Theorem}
\begin{proof}
According to Theorem \ref{lim kida}, $\Seln$ is a cotorsion module over $\Lambda_{F^{(n)}}$ and the $\mu$-invariant is given by
\[\mu_p(E/F^{(n)})=[F^{(n)}:F]\mu_p(E/F)=p^{nd}\mu_p(E/F).\]
Furthermore, the $\lambda$-invariant is
\[\lambda_p(E/F^{(n)})=[F^{(n)}_{\op{cyc}}:F_{\op{cyc}}]\lambda_p(E/F)+\sum_{w\in P_1} \left(e(w)-1\right)+2\sum_{w\in P_2} \left(e(w)-1\right),\] where $e(w)$ is the ramification index of $w$ in $F^{(n)}_{\op{cyc}}/F_{\op{cyc}}$, $P_1$ and $P_2$ are the set of primes of $F^{(n)}_{\op{cyc}}$ defined as follows:
\[\begin{split}
    & P_1=\{w\mid w\nmid p, E\text{ has split multiplicative reduction at }w\},\\
    & P_2=\{w\mid w\nmid p, E\text{ has good reduction at }w\text{ and }E(\cF_{n,w})\text{ has a point of order }p\}.
\end{split}\]
Since $p$ is odd, the primes $P_i$ lie above $Q_i$, therefore, \[\sum_{w\in P_i} \left(e(w)-1\right)=\sum_{v\in Q_i} \left(\sum_{w\mid v} \left(e(w)-1\right)\right).\]
Choose a prime $w_0\in P_i$ above $v$. Since $e(w)$ is the same for all primes $w|v$, we have that 
\[\sum_{w\mid v} \left(e(w)-1\right)=\left(1-e(w_0)^{-1}\right)\sum_{w\mid v}e(w)\leq \left(1-e(w_0)^{-1}\right)[F^{(n)}_{\op{cyc}}:F_{\op{cyc}}].\]According to Lemma \ref{H is uni}, $H$ is uniform with $d-1$ generators. Note that $[F^{(n)}_{\op{cyc}}:F_{\op{cyc}}]=[H:H_n]=p^{(d-1)n}$. Since pro-$p$ tame inertia is generated by a single element, it follows that $e(w_0)\leq p^{n}$. Putting it all together, the result follows.
 
\end{proof}
\begin{Theorem}\label{main th}
Let $E$ be an elliptic curve defined over a number field $F$ and $\Fin$ a uniform pro-$p$ extension of $F$ satisfying aforementioned conditions and suppose that the Selmer group over $F_{\op{cyc}}$ is cotorsion as a $\Z_p\llbracket x\rrbracket$-module. Then, we have the following bound
\[\op{rank} E(F^{(n)})\leq p^{n(d-1)}\lambda_p(E/F)+\left(p^{n(d-1)}-p^{n(d-2)}\right)(q_1+2q_2).\]
\end{Theorem}
\begin{proof}
The result immediately follows from Theorem \ref{premain} and the inequality 
\[\op{rank} E(F^{(n)})\leq \lambda_p(E/F^{(n)}),\]see \cite[Theorem 1.9]{greenbergITEC}.
\end{proof}

\begin{Remark}\label{better estimate}
The estimate above is stronger than \cite[Theorem 3.1]{HL}. The error term is $O(p^{n(d-2)})$ and their method used relies on the work of M.~Harris. In greater detail, \cite[Theorem 1.10]{harris2000correction} is the key result used in the estimate in \cite[Lemma 3.3]{HL}. Note however, that the error estimate of Hung-Lim is known to be 0 under certain additional constraints. Namely, if certain cohomology groups vanish and the $p$-torsion group $E(F_\infty)(p)$ is finite, see \cite[Theorem 3.2]{HL} and the remark following it. If $F_\infty$ contains the extension $F(E[p^\infty])$ generated by the $p$-primary torsion of $E$, then the error term of Hung-Lim is non-zero, even under additional assumptions. Also, even when the error term of Hung-Lim is 0, the the estimate above is strictly better when $q_1$ or $q_2$ is non-zero.
\end{Remark}
The improvement in the bound has some non-trivial consequences, which we shall explain in the next section. The following is a Corollary to Theorem \ref{premain} and is entirely unconditional.

\begin{Corollary}
Let $E$ and $F_\infty$ be as in Theorem \ref{main th}. Assume that $q_1=q_2=0$ and 
\[\op{Sel}_{p^\infty}(E/F_{\op{cyc}})=0.\] Then, $\op{Sel}_{p^\infty}(E/F^{(n)}_{\op{cyc}})=0$ for all $n$.
\end{Corollary}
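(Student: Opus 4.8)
The plan is to show that the hypothesis forces both Iwasawa invariants to vanish at every finite layer of the tower, so that $\Seln$ is finite, and then to promote ``finite'' to ``zero'' using a structural feature of Selmer groups peculiar to the good ordinary case. First I would observe that $\op{Sel}_{p^\infty}(E/\Fcyc) = 0$ has trivial Pontryagin dual, hence unit characteristic ideal, so $\mu_p(E/\Fcyc) = \lambda_p(E/\Fcyc) = 0$; in particular $\op{Sel}_{p^\infty}(E/\Fcyc)$ is trivially cotorsion over $\Zx$.

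Next I would feed this into the bounds of Theorem \ref{premain}. The point behind the word ``unconditional'' is that in the present situation $\mu_p(E/\Fcyc) = 0$, so the first case of the proof of that theorem applies, and that case rests solely on the Hachimori--Matsuno Riemann--Hurwitz formula \cite{HM} applied to the finite $p$-extensions $F^{(n)}/F$ --- it never invokes the $\mathfrak{M}_H(G)$-conjecture or any noncommutative input, only the cotorsionness just noted. With $q_1 = q_2 = 0$ and $\lambda_p(E/\Fcyc) = 0$, the displayed estimates of Theorem \ref{premain} collapse to $\mu_p(E/F^{(n)}_{\op{cyc}}) = 0$ and $\lambda_p(E/F^{(n)}_{\op{cyc}}) = 0$, and $\Seln$ is cofinitely generated and cotorsion over $\Lambda(\Gamma_n) \simeq \Zx$. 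A cofinitely generated cotorsion $\Zx$-module with vanishing $\mu$- and $\lambda$-invariants has unit characteristic ideal, hence is pseudo-null, and since $\Zx$ is a two-dimensional regular local ring this makes $\Seln$ \emph{finite}.

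The step I expect to be the main obstacle is passing from finiteness to outright vanishing: this is genuinely non-formal, and it is precisely where the good ordinary hypothesis does work. Here I would invoke Greenberg's theorem that if $E$ has good ordinary reduction at all primes above $p$ and $\op{Sel}_{p^\infty}(E/F^{(n)})$ is finite, then the $\Lambda(\Gamma_n)$-module $\Seln^{\vee}$ contains no nonzero finite submodule (see \cite{greenbergITEC}). Its hypothesis is met, since $E/F^{(n)}$ still has good ordinary reduction at the primes above $p$ and Mazur's control theorem for $F^{(n)}_{\op{cyc}}/F^{(n)}$, combined with the finiteness of $\Seln$ just obtained, shows $\op{Sel}_{p^\infty}(E/F^{(n)})$ is finite. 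A module that is at once finite and free of nonzero finite submodules must vanish, so $\Seln = 0$. As an alternative to this last step one could induct on $n$, controlling $\Seln$ inside $\op{Sel}_{p^\infty}(E/F^{(n+1)}_{\op{cyc}})^{H_n/H_{n+1}}$ and noting that $q_1 = q_2 = 0$ kills the local defect terms at the ramified primes, so that Nakayama's lemma closes the induction; but that essentially re-derives the relevant Kida-type formula by hand, and routing through Greenberg's result is shorter.
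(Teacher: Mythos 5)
Your argument is correct and follows essentially the same route as the paper: Theorem \ref{premain} with $q_1=q_2=0$ and vanishing invariants over $\Fcyc$ gives $\mu_p(E/F^{(n)}_{\op{cyc}})=\lambda_p(E/F^{(n)}_{\op{cyc}})=0$, hence $\Seln$ is finite, and Greenberg's no-proper-finite-index-submodule result then forces it to be zero. The only cosmetic differences are that the paper secures unconditionality by noting that $\mu_p(E/\Fcyc)=0$ implies the $\mathfrak{M}_H(G)$-conjecture (via Coates--Sujatha) rather than observing that the Hachimori--Matsuno case of Theorem \ref{premain} needs no noncommutative input, and it invokes Greenberg's theorem over $F^{(n)}_{\op{cyc}}$ directly without your extra detour through Mazur's control theorem and the finiteness of $\op{Sel}_{p^\infty}(E/F^{(n)})$.
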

\begin{proof}
Since the $\mu$-invariant $\mu_p(E/F)=0$, it follows that $\mathfrak{M}_H(G)$ is satisfied for $F_\infty$, see \cite[Theorem 2.1]{CS12}. By Theorem \ref{premain}, it follows that the $\mu$ and $\lambda$-invariants of $\op{Sel}_{p^\infty}(E/F^{(n)}_{\op{cyc}})$ are $0$, hence, $\op{Sel}_{p^\infty}(E/F^{(n)}_{\op{cyc}})$ is finite. On the other hand, this Selmer does not contain any finite index submodules (see \cite[Proposition 4.14]{Gre99}), hence, must be 0.
\end{proof}
\section{Special Cases}
\par In this section, we study special cases of Theorem \ref{main th}. Assume throughout that the Assumption \ref{ass} is satisfied. Recall from the proof of Theorem \ref{lim kida} that this in particular implies that $\op{Sel}_{p^\infty}(E/L)$ is cotorsion over $\Lambda_L$ for every number field extension $L/K$ contained in $F_\infty$.
\subsection{$\Z_p^d$-extensions} Throughout this subsection, $F$ will be an abelian number field and $E_{/\Q}$ an elliptic curve with good ordinary reduction at $p$. Let $\Fin$ be the composite of all $\Z_p$-extensions of $F$, note that $G=\op{Gal}(F_\infty/F)\simeq \Z_p^d$, where $d=r_2(F)+1$. For instance, when $F$ is an imaginary quadratic field, then, this gives a $\Z_p^2$-extension of $F$. To emphasize the dependence on the prime $p$, we denote the extension by $F_\infty(p)$. On the other hand, it follows from results of K.~Kato and D.~Rohlrich \cite[Theorem 2.2]{HM} that the Selmer group $\op{Sel}_{p^\infty}(E/F_\infty)$ is cotorsion as a $\Zx$-module. It is well known that any $\Z_p$-extension is unramified away from $p$ (see \cite{washington1997}), hence, the composite of such extensions has the same property. Let us state a few Corollaries to Theorem \ref{main th}, the first of which gives a simple criterion for the rank to be zero throughout the $\Z_p^d$-tower.
\begin{Corollary}\label{cor 3.1}
Let $E$ be as above and assume that $\lambda_p(E/F_{\op{cyc}})=0$. Then, \[\op{rank} E(F^{(n)})=0\] for all $n\in \Z_{\geq 1}$ and \[\mu_p(E/F^{(n)})=p^{nd}\mu_p(E/F).\]
\end{Corollary}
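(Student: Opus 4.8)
The plan is to read off both assertions from Theorem~\ref{premain} after checking that its hypotheses hold and that the error term in its upper bound vanishes in this setting. First I would confirm that Assumption~\ref{ass} is in force: the $\mathfrak{M}_H(G)$-conjecture for $\op{Sel}_{p^\infty}(E/F_\infty)$ is a standing hypothesis of this section, and the cotorsionness of the Pontryagin dual of $\op{Sel}_{p^\infty}(E/F_{\op{cyc}})$ over $\Z_p\llbracket x\rrbracket$ is exactly the consequence of the theorems of Kato and Rohrlich recorded above (applicable since $E$ is defined over $\Q$ and $F$ is abelian). The group $G\simeq\Z_p^d$ is uniform and $F_\infty$ is finitely ramified and contains $\Fcyc$, so the framework of Section~\ref{section: growth} applies and Theorem~\ref{premain} may be invoked at every layer $F^{(n)}$.

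The key point is that $q_1=q_2=0$. By construction $F_\infty$ is the composite of all $\Z_p$-extensions of $F$, each of which is unramified outside $p$; hence no prime $w\nmid p$ of $\Fcyc$ ramifies in $F_\infty$, and the sets $Q_1,Q_2$ defining $q_1,q_2$ are therefore empty. Feeding this into Theorem~\ref{premain}, the displayed inequalities collapse to $p^{n(d-1)}\lambda_p(E/\Fcyc)\leq \lambda_p(E/F^{(n)}_{\op{cyc}})\leq p^{n(d-1)}\lambda_p(E/\Fcyc)$, so the hypothesis $\lambda_p(E/\Fcyc)=0$ forces $\lambda_p(E/F^{(n)}_{\op{cyc}})=0$ for all $n\geq 1$; likewise the $\mu$-invariant formula $\mu_p(E/F^{(n)}_{\op{cyc}})=p^{n(d-1)}\mu_p(E/\Fcyc)$ is immediate from the same theorem.

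To finish the rank statement I would apply the inequality $\op{rank}E(F^{(n)})\leq \lambda_p(E/F^{(n)}_{\op{cyc}})$ of \cite[Theorem~1.9]{greenbergITEC} (equivalently, specialize Theorem~\ref{main th} to $q_1=q_2=0$ and $\lambda_p(E/\Fcyc)=0$); since the right-hand side is $0$ and Mordell--Weil ranks are non-negative, $\op{rank}E(F^{(n)})=0$. I do not expect any genuine obstacle here: the argument is purely a matter of verifying that the ambient hypotheses of Theorem~\ref{premain} are satisfied in the $\Z_p^d$ setting and observing that the only possible source of an error term, namely ramification of primes away from $p$, is absent for a $\Z_p^d$-extension assembled from $\Z_p$-extensions.
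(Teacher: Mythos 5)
Your proposal is correct and follows essentially the same route as the paper: note that the $\Z_p^d$-extension is unramified outside $p$ so $q_1=q_2=0$, invoke Theorem \ref{premain} (under the standing hypotheses of the section) to get $\lambda_p(E/F^{(n)}_{\op{cyc}})=0$ and the $\mu$-invariant formula, and conclude the rank statement from $\op{rank}E(F^{(n)})\leq \lambda_p(E/F^{(n)}_{\op{cyc}})$. Your write-up is somewhat more explicit about verifying Assumption \ref{ass}, but the substance matches the paper's proof.
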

\begin{proof}
Note that since $E$ has good ordinary reduction at the primes above $p$ and \[\op{rank} E(F)\leq \lambda_p(E/F),\] the Mordell--Weil rank of $E$ is 0. Since $\Fin$ is unramified at all primes $w\nmid p$, the quantities $q_1$ and $q_2$ in Theorem \ref{main th} are both equal to 0. 
\end{proof}
\begin{Remark}
When $F$ is an imaginary quadratic field and $E$ is a CM elliptic curve over $F$, the result of Hung-Lim in the above context shall imply that the rank in \emph{bounded} in the tower, however, not identically 0. In the more general case, their result implies that the growth is $O(p^{n(d-2)})$ unless certain homology groups are known to vanish, see the discussion after \cite[Theorem 3.2]{HL}.
\end{Remark}
\par \emph{Example:} Picking an elliptic curve $E_{/\Q}$ at random, there are typically some primes at which $E[p]$ is residually reducible as a Galois module. At these primes, it is possible that the $\mu$-invariant $\mu_p(E/\Q)$ does not vanish. For instance, let's pick the elliptic curve of smallest conductor with cremona label $\href{https://www.lmfdb.org/EllipticCurve/Q/11a2/}{11a2}$. We find that $E$ has good ordinary reduction at $5$ with $\mu_5(E/\Q)=2$ and $\lambda_5(E/\Q)=0$. Suppose that there is an imaginary quadratic field $F/\Q$ in which $\op{rank} E(F)=0$ and $\lambda_5(E/F)=0$ as well. Let $F_\infty$ be the $\Z_p^2$-extension of $F$. Then, indeed, since $\mu_5(E/F)\geq 2$, the above result implies that 
\[\mu_5(E/F^{(n)})\geq 2p^{2n}\] for all $n\geq 1$, however, the rank of $E(F_n)$ remains $0$ throughout. Unfortunately, the author is not aware of any existing computer packages that can compute the $\lambda$-invariant over an imaginary quadratic field.

\begin{Corollary}\label{100 percent}
Let $E_{/\Q}$ be an elliptic curve and $F$ an abelian number field satisfying
\begin{enumerate}
    \item $\op{rank} E(F)=0$,
    \item $E$ does not have complex multiplication.
\end{enumerate}
Then, for $100\%$ of primes $p$ at which $E$ has good ordinary reduction, \[\op{rank} E(F_\infty(p))=0.\]
\end{Corollary}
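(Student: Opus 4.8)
The plan is to deduce Corollary \ref{100 percent} from Corollary \ref{cor 3.1} by showing that the hypothesis $\lambda_p(E/F_{\op{cyc}})=0$ holds for $100\%$ of the ordinary primes $p$. First I would reduce from the abelian number field $F$ to $\Q$. Since $F/\Q$ is abelian, $F$ is contained in $\Q(\zeta_m)$ for some $m$, and the cyclotomic $\Z_p$-extension $F_{\op{cyc}}$ is contained in $\Q(\zeta_{mp^\infty})$. For all but finitely many $p$ (in particular $p\nmid m$), $F_{\op{cyc}}$ is contained in a finite abelian extension $F$ of $\Q$ composed with $\Q_{\op{cyc}}$, with $[F:\Q]$ prime to $p$. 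Under this coprimality, there is a standard descent: the natural map on Selmer groups $\op{Sel}_{p^\infty}(E/\Q_{\op{cyc}})\to \op{Sel}_{p^\infty}(E/F_{\op{cyc}})^{\op{Gal}(F/\Q)}$ has kernel and cokernel controlled by cohomology of a group of order prime to $p$, hence is an isomorphism, and moreover $\op{Sel}_{p^\infty}(E/F_{\op{cyc}})$ is a summand of an induced module; this forces $\lambda_p(E/F_{\op{cyc}})$ to be a sum of $\lambda$-invariants of Selmer groups of twists $E\otimes\chi$ over $\Q_{\op{cyc}}$ as $\chi$ ranges over the characters of $\op{Gal}(F/\Q)$. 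So it suffices to show that for $100\%$ of ordinary $p$, each such twisted $\lambda$-invariant over $\Q_{\op{cyc}}$ vanishes.

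The key input is the theorem of Greenberg that for an elliptic curve $E_{/\Q}$ without complex multiplication and with $\op{rank} E(\Q)=0$ (more precisely, with $\Sha(E/\Q)[p]$ finite and $L(E,1)\neq 0$ handled via the $p$-adic analytic side), one has $\lambda_p(E/\Q_{\op{cyc}})=0$ for all primes $p$ outside a density-zero set; this is the conjecture of Greenberg that the Iwasawa invariants are ``as small as possible'' for almost all ordinary primes, which is known in the analytic-rank-zero case via the work on the main conjecture (Kato, Skinner--Urban) combined with the nonvanishing of the $p$-adic $L$-function at the relevant point for density-one $p$. Concretely, $\lambda_p(E/\Q_{\op{cyc}})=0$ precisely when $p$ does not divide the algebraic part of $L(E,1)$ (and $E[p]$ is irreducible and a few Tamagawa/torsion conditions hold), and only finitely many $p$ divide that fixed nonzero rational number, while $E[p]$ is irreducible for all but finitely many $p$ by Serre. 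The same argument applies verbatim to each quadratic or higher-order twist $E\otimes\chi$ appearing above: there are only finitely many such $\chi$ (they are fixed once $F$ is fixed), each twisted $L$-value $L(E\otimes\chi,1)$ is nonzero away from a density-zero set of $p$ by the analogous nonvanishing result, and its algebraic part is divisible by only finitely many $p$.

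Putting these together, I would: (1) fix the finitely many characters $\chi$ of $\op{Gal}(F/\Q)$; (2) exclude the finite set of $p$ dividing $[F:\Q]$, the conductors, or the denominators of the relevant periods, and the density-zero set of $p$ where some $L(E\otimes\chi,1)$ is $p$-indivisible fails or where Greenberg's criterion is obstructed, and the finitely many $p$ where some $E[p]$ or $(E\otimes\chi)[p]$ is reducible; (3) conclude that for the remaining density-one set of ordinary $p$ we have $\lambda_p(E\otimes\chi/\Q_{\op{cyc}})=0$ for every $\chi$, hence $\lambda_p(E/F_{\op{cyc}})=0$; (4) apply Corollary \ref{cor 3.1} to conclude $\op{rank} E(F^{(n)})=0$ for all $n$, and therefore $\op{rank} E(F_\infty(p))=\sup_n \op{rank} E(F^{(n)})=0$.

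The main obstacle is item (2): one must be careful that ``$100\%$ of ordinary primes'' really is preserved through the descent to the twists, i.e. that the density-zero exceptional sets for the finitely many twists $E\otimes\chi$ only contribute a density-zero set in total (which is immediate since a finite union of density-zero sets is density-zero), and more substantively that the nonvanishing and $p$-indivisibility statements one needs for the twisted Selmer groups over $\Q_{\op{cyc}}$ are genuinely available in the literature in the non-CM, analytic-rank-zero setting. I expect to cite Greenberg's paper on Iwasawa theory for elliptic curves for the criterion $\lambda_p=\mu_p=0$ and the work of Kato and Skinner--Urban (or, in the rank-zero case, Rubin-type Euler system bounds) to identify $\lambda_p(E/\Q_{\op{cyc}})$ with the $p$-adic valuation of the algebraic $L$-value, together with Serre's open image theorem to handle residual irreducibility uniformly.
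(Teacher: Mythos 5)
Your overall skeleton is the same as the paper's: show that for a density-one set of good ordinary primes $p$ the $\lambda$-invariant (in fact the whole Selmer group) over $\Fcyc$ vanishes, then invoke Corollary \ref{cor 3.1} and the fact that $\op{rank}E(F_\infty(p))=\sup_n\op{rank}E(F^{(n)})$. The paper, however, obtains the density-one input in one step, citing D.~Kundu's thesis (Theorem 5.1.1 of \cite{kundu}), which generalizes Greenberg's control-theorem argument directly over the abelian field $F$: for $100\%$ of primes, $E$ is good ordinary above $p$ and $\op{Sel}_{p^\infty}(E/\Fcyc)=0$. Note that this stronger conclusion also gives $\mu_p(E/\Fcyc)=0$, so the $\mathfrak{M}_H(G)$-condition standing over Section 3 is automatic for these primes. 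Your route instead descends to $\Q$, decomposes $\op{Sel}_{p^\infty}(E/\Fcyc)$ into twists $E\otimes\chi$ over $\Q_{\op{cyc}}$ (fine for $p\nmid [F:\Q]$), and then tries to kill each twisted $\lambda$-invariant analytically via the main conjecture (Kato, Skinner--Urban) and $p$-indivisibility of algebraic $L$-values.

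The analytic step is where there is a genuine gap. The hypothesis of the corollary is only that the Mordell--Weil rank $\op{rank}E(F)$ is $0$; it does not give you $L(E\otimes\chi,1)\neq 0$. Your assertion that ``each twisted $L$-value $L(E\otimes\chi,1)$ is nonzero away from a density-zero set of $p$'' is not meaningful as stated, since $L(E\otimes\chi,1)$ is a single complex number independent of $p$; what you actually need is its nonvanishing outright, i.e.\ analytic rank zero for every $\chi$-component. Deducing that from algebraic rank zero is a converse theorem, and the known $p$-converse results require finiteness (or corank-zero) statements for the relevant Selmer/Tate--Shafarevich groups, which are not among the hypotheses here. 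If some $L(E\otimes\chi,1)$ were zero, your criterion ``$\lambda_p=0$ precisely when $p\nmid L^{\op{alg}}$'' is vacuous and the argument does not close. (There are further, smaller issues: the Skinner--Urban main conjecture and the identification of $\lambda_p$ with the valuation of the algebraic $L$-value carry their own local hypotheses, and the $\mu$-part of Kato's divisibility needs a big-image assumption.) The fix is to argue as Greenberg and Kundu do, purely algebraically: for density-one $p$ one shows the relevant Selmer group over $F$ (or its residual version) vanishes together with the local conditions at Tamagawa and anomalous primes, and then a control theorem forces $\op{Sel}_{p^\infty}(E/\Fcyc)=0$; this is exactly the result the paper quotes, and it avoids $L$-values and twists altogether.
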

\begin{proof}
In her thesis \cite[Theorem 5.1.1]{kundu}, D.~Kundu generalized a result of R.~Greenberg to show that the proportion of primes $p$ such that
\begin{enumerate}
    \item $E$ has good ordinary reduction at the primes of $F$ above $p$,
    \item $\op{Sel}_{p^\infty}(E/\Fcyc)=0$,
\end{enumerate} is $100\%$. The result follows from this and Corollary \ref{cor 3.1}.
\end{proof}
The following is a special case of \cite[Conjecture 1]{HL}.

\begin{Conjecture}\cite[Conjecture 1']{HL}
Let $E$ be an elliptic curve over an imaginary quadratic field $F$, $p\geq 5$ a prime and $F_\infty$ be the $\Z_p^2$-extension of $F$. Assume that the following conditions are satisfied:
\begin{enumerate}
    \item $E$ has good ordinary reduction at all primes above $p$,
    \item Assumption \ref{ass} is satisfied.
\end{enumerate}Then, we have that $\op{rank} E(F^{(n)})\leq \op{rank} E(F_{\op{cyc}}) p^{n}$ for all $n\geq 1$.
\end{Conjecture} 
\begin{Corollary}
Consider the setting of the above conjecture. Under Assumption \ref{ass}, the Theorem \ref{main th} specializes to give that \[\op{rank} E(F^{(n)})\leq \lambda_p(E/F) p^{n}.\] Thus, the Conjecture is true when 
\[ \lambda_p(E/F)=\op{rank} E(F_{\op{cyc}}).\]
\end{Corollary}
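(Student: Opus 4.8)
The plan is to read off the claimed bound directly from Theorem \ref{main th} after pinning down the numerical parameters. Since $F$ is imaginary quadratic we have $r_2(F)=1$, so the composite $\Fin$ of all $\Z_p$-extensions of $F$ satisfies $G=\op{Gal}(\Fin/F)\simeq\Z_p^2$; thus $d=2$. Next I would invoke the fact, recalled in this section (see \cite{washington1997}), that every $\Z_p$-extension of a number field is unramified outside $p$, and hence so is $\Fin$. Consequently no prime $w\nmid p$ of $\Fcyc$ is ramified in $\Fin$, so the sets $Q_1=Q_1(E,\Fin)$ and $Q_2=Q_2(E,\Fin)$ are empty and $q_1=q_2=0$.

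With these inputs in hand, Theorem \ref{main th} applies — its hypotheses are satisfied, since the $\Zx$-cotorsionness of $\op{Sel}_{p^\infty}(E/\Fcyc)$ and the $\mathfrak{M}_H(G)$-conjecture are part of Assumption \ref{ass}, which is in force throughout this section — and yields
\[\op{rank} E(F^{(n)})\leq p^{n(d-1)}\lambda_p(E/\Fcyc)+\bigl(p^{n(d-1)}-p^{n(d-2)}\bigr)(q_1+2q_2)=p^{n}\lambda_p(E/\Fcyc),\]
where the second summand drops out because $q_1=q_2=0$, and we have used $d-1=1$. This gives the asserted specialization. Finally, under the additional hypothesis $\lambda_p(E/\Fcyc)=\op{rank} E(\Fcyc)$, the right-hand side equals $\op{rank} E(\Fcyc)\,p^{n}$, which is exactly the inequality predicted by the Conjecture; hence the Conjecture holds in that case.

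I do not anticipate any genuine obstacle: the statement is a direct numerical specialization of Theorem \ref{main th}, and the only points requiring any care are verifying that the standing hypotheses apply — namely that $\Fin$, being a composite of $\Z_p$-extensions, is unramified away from $p$ so that $q_1=q_2=0$, and that $\op{Sel}_{p^\infty}(E/\Fcyc)$ is $\Zx$-cotorsion, which follows from the theorem of Kato and Rohrlich cited earlier (\cite[Theorem 2.2]{HM}) since $F$ is abelian over $\Q$. The substantive content is entirely contained in Theorem \ref{main th}, equivalently in Theorem \ref{premain} together with the inequality $\op{rank} E(F^{(n)})\leq\lambda_p(E/F^{(n)}_{\op{cyc}})$ of \cite[Theorem 1.9]{greenbergITEC}.
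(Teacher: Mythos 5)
Your argument is correct and is exactly the specialization the paper intends: with $F$ imaginary quadratic one has $d=2$, and since the composite of $\Z_p$-extensions is unramified outside $p$ one gets $q_1=q_2=0$, so Theorem \ref{main th} (under Assumption \ref{ass}, with cotorsionness over $\Fcyc$ supplied by Kato--Rohrlich) reduces to $\op{rank} E(F^{(n)})\leq \lambda_p(E/\Fcyc)p^n$, and the Conjecture follows when $\lambda_p(E/\Fcyc)=\op{rank} E(\Fcyc)$. This matches the paper's (implicit) proof, so nothing further is needed.
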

\begin{Remark}
Note that $\lambda_p(E/F)\geq \op{rank} E(F^{(n)})$ for all $n$ and hence,  $\lambda_p(E/F)\geq \op{rank} E(\Fcyc)$. Indeed, it can be expected that $\lambda_p(E/F)=\op{rank} E(F)$ for $100\%$ of primes above which $E$ has good ordinary reduction. There is much evidence pointing towards this expectation for elliptic curves defined over the rationals, see \cite{stats}. We do expect that similar arguments do carry over to elliptic curves over imaginary quadratic fields. 
\end{Remark}

\subsection{False-Tate curve extensions}
Let $\ell$ be a finite set of prime numbers that are coprime to $p$ and let $F_\infty$ be the False-Tate curve extension of $F=\Q(\mu_p)$, given by \[F_\infty:=\Q(\mu_{p^\infty},\ell^{\frac{1}{p^{\infty}}}).\] In other words, it is the extension obtained by adjoining all $p$-power roots of $1$ and $\ell$. It is easy to see that $F_\infty/F$ is a uniform pro-$p$ extension of $F$ of dimension $d=2$. Thus Theorem \ref{main th} specializes to give us that 
\[\op{rank} E(F^{(n)})\leq \lambda_p(E/F)p^n+(p^n-1)\left(q_1+2q_2\right).\] Let us compute the values of $q_1$ and $q_2$ for a given example. We note that it is difficult to compute $\lambda_p(E/F)$ due to the base change to $F=\Q(\mu_p)$. 
\par \emph{Example:} We pick an elliptic curve and prime at random. Let $E=\href{https://www.lmfdb.org/EllipticCurve/Q/11a2/}{11a2}$ in Cremona label and $p=7$. The elliptic curve is defined over $\Q$ and we consider its base change to $F=\Q(\mu_7)$. It follows from Assumption \ref{ass} and from the proof of Theorem \ref{lim kida} that $\op{Sel}_{7^\infty}(E/\Q(\mu_{7^\infty}))$ is cotorsion over $\Lambda_F$. Note that since $F$ is an abelian extension of $\Q$, the cotorsion property of the Selmer group (over $\Lambda_F$) also follows from results due to Kato, see \cite{kato2004p}. The image of the residual representation at $p=7$ contains $\op{SL}_2(\F_7)$, as stated in the link provided. Hence, after base change to $\Q(\mu_7)$ the image of the residual representation will still contain $\op{SL}_2(\F_7)$. It is thus reasonable to expect that the $\mu$-invariant $\mu_7(E/F)=0$, however, this is difficult to prove and needs to be assumed. Consider the False Tate extension
\[F_\infty:=\Q(\mu_{7^\infty},11^{\frac{1}{7^{\infty}}}).\]Then, $E$ has split multiplicative reduction at $11$, hence $q_2=0$, however, $q_1>0$. Since $11^3\equiv 1\mod{7}$ and $11^3\not\equiv 1\mod{49}$, there are precisely two primes above $11$ in $\Fcyc=\Q(\mu_{7^\infty})$. It follows that $q_1=2$. Putting it all together, we find that
\[\op{rank} E(F^{(n)})\leq \lambda_7(E/F)7^n+2(7^n-1).\]

\par Next, we prove a statistical result. 
\begin{Corollary}\label{2ndapp}
Consider the elliptic curve $E=11a2$ from the example above and set $p=7$. There is a positive density set of primes $\ell$ such that \[\op{rank}E\left(\Q(\mu_{7^{n+1}}, \ell^{\frac{1}{7^n}})\right)\leq \lambda_7(E/F)7^n\]for any integer $n\geq 1$.
\end{Corollary}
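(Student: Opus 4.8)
The plan is to apply Theorem~\ref{main th} to $E = 11a2$, $p = 7$, $F = \Q(\mu_7)$ and $\Fin = \Q(\mu_{7^\infty}, \ell^{\frac{1}{7^\infty}})$ — a uniform pro-$7$ extension of dimension $d = 2$ with $n$-th layer $F^{(n)} = \Q(\mu_{7^{n+1}}, \ell^{\frac{1}{7^n}})$ — and to exhibit a positive density set of primes $\ell$ for which $q_1 = q_2 = 0$, so that the bound of that theorem collapses to $\op{rank} E(F^{(n)}) \le \lambda_7(E/\Fcyc) p^n$. The hypotheses of Theorem~\ref{main th} are in force here: $\op{Sel}_{7^\infty}(E/\Fcyc)$ is $\Zx$-cotorsion because $F/\Q$ is abelian (Kato), and $\mathfrak{M}_H(G)$ is assumed throughout this section.

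First, since $E = 11a2$ has conductor $11$, it has good reduction at every prime $\ell \notin \{7,11\}$. For such $\ell$ the primes of $\Fcyc = \Q(\mu_{7^\infty})$ lying above a prime $\ne 7$ that ramifies in $\Fin$ are exactly those above $\ell$, and $E$ has good (hence not split multiplicative) reduction at each of them; so $q_1 = 0$ automatically, and the task reduces to forcing $q_2 = 0$, i.e. $E(F_{\op{cyc},w})[7] = 0$ for every prime $w \mid \ell$ of $\Fcyc$.

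For the local analysis I would observe that, for $w \mid \ell$, the completion $F_{\op{cyc},w}$ equals $\Q_\ell(\mu_{7^\infty})$, an unramified extension of $\Q_\ell$ whose residue field $\F_{\ell^f}$ has $f = \op{ord}(\ell \in \Z_7^\times)$, a supernatural number whose prime-to-$7$ part is $f_0 := \op{ord}(\ell \bmod 7)$. Since $E$ has good reduction at $\ell \ne 7$, the representation $\bar{\rho} = \bar{\rho}_{E,7}$ is unramified at $\ell$, with $g := \bar{\rho}(\op{Frob}_\ell) \in \op{GL}_2(\F_7)$, and $E(F_{\op{cyc},w})[7] = E[7]^{\op{Gal}(\overline{\F_\ell}/\F_{\ell^f})}$. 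Because $\bar{\rho}$ has finite image, one checks that when $7 \nmid \op{ord}(g)$ one has $\langle g^{f_0 7^k} \rangle = \langle g^{f_0} \rangle$ for all $k$, so this fixed space is just $E[7]^{g^{f_0}}$; hence $q_2 = 0$ as soon as $g^{f_0}$ has no eigenvalue $1$. Now I would invoke the surjectivity of $\bar{\rho}_{E,7}$ for $11a2$ (so $\op{Gal}(\Q(E[7])/\Q) \cong \op{GL}_2(\F_7)$) together with the Chebotarev density theorem: the primes $\ell$ with $\bar{\rho}(\op{Frob}_\ell)$ conjugate to $g_0 := \op{diag}(2,4) \in \op{SL}_2(\F_7)$ form a set of positive density, and discarding $\ell \in \{7,11\}$ changes nothing. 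For these $\ell$ we get $\ell \equiv \det g_0 \equiv 1 \bmod 7$, so $f_0 = 1$, while $\op{ord}(g_0) = 3$ is prime to $7$; thus $E(F_{\op{cyc},w})[7] = E[7]^{g_0} = 0$. Therefore $q_2 = 0$, and Theorem~\ref{main th} with $q_1 = q_2 = 0$ and $d = 2$ gives $\op{rank} E(F^{(n)}) \le \lambda_7(E/\Fcyc) p^n$, as claimed.

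The only step requiring real care is the local computation: since $[\F_{\ell^f} : \F_\ell]$ is an infinite supernatural degree, one must argue that the decomposition group acts on the finite module $E[7]$ through the correct finite cyclic quotient, and verify $F_{\op{cyc},w} = \Q_\ell(\mu_{7^\infty})$. The reduction to $q_1 = q_2 = 0$, the surjectivity of the mod-$7$ image of $11a2$, and the Chebotarev argument are all standard.
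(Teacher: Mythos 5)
Your proposal is correct and follows essentially the same route as the paper: reduce to showing $q_1=q_2=0$ for a positive-density set of $\ell$ and then apply Theorem \ref{main th} with $d=2$, with $q_1=0$ coming from good reduction at $\ell\neq 11$ and $q_2=0$ from a Chebotarev condition on $\op{Frob}_\ell$ in $\op{Gal}(\Q(E[7])/\Q)\simeq \op{GL}_2(\F_7)$. The only cosmetic differences are that the paper restricts to $\ell\equiv 1\bmod 7$ from the outset and identifies $E(F_{\op{cyc},w})[7]$ with $E(k_w)[7]$ via the formal group plus pro-$7$ descent, while you argue directly with the unramified action on $E[7]$ and make the Chebotarev class (e.g.\ $\op{diag}(2,4)$, order $3$, determinant $1$, no eigenvalue $1$) explicit, which if anything fills in a step the paper leaves to the reader.
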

\begin{proof}
For each prime number $\ell$ such that $\ell\equiv 1\mod{7}$, let $F_\infty^{(\ell)}$ denote the extension $\Q(\mu_{7^\infty}, \ell^{\frac{1}{7^\infty}})$. Note that $\ell$ splits in $\Q(\mu_7)$ and the only prime other than $7$ that ramifies in $F_\infty^{(\ell)}$ is $\ell$. Since $E$ has bad reduction at only finitely many primes, we deduce that $q_1=0$ for all extensions $F_\infty^{(\ell)}$ except for finitely many choices of $\ell$. Recall that $Q_2$ consists of the primes $w\nmid 7$ of $F_{\op{cyc}}$ that are ramified in $F_\infty$, such that $E$ has good reduction at $w$ and $E(F_{\op{cyc},w})[7]\neq 0$. Since the formal group of $E$ at $w$ is pro-$\ell$, $E(F_{\op{cyc},w})[7]\simeq E(k_w)[7]$, where $k_w$ is the residue field of $F_{\op{cyc},w}$. Since $k_w$ is a $7$-extension of $\F_\ell$, it follows that $E(k_w)[7]\neq 0$ if and only if $E(\F_\ell)[7]\neq 0$. Thus, the prime $w$ lies in $Q_2$ if the following conditions are satisfied:
\begin{enumerate}
    \item $w|\ell$, 
    \item $E$ has good reduction at $\ell$, 
    \item $E(\F_\ell)[7]\neq 0$.
\end{enumerate}
This latter condition is satisfied for $\frac{1}{7}$ of all primes $\ell$, see \cite[section 2]{cojocaru2004questions} for further details. We show that a similar application of the Chebotarev density theorem shows that for a positive proportion of the primes $\ell$, both $q_1$ and $q_2$ are $0$. Note that if a prime $\ell$ splits in $\Q(\mu_7)$ precisely when $\ell\equiv 1\mod{7}$. Let $\bar{\rho}:\op{Gal}(\bar{\Q}/\Q)\rightarrow \op{GL}_2(\F_7)$ be the Galois representation on the $7$-torsion points $E[7]$. Denote by $\Q(\bar{\rho})$ the number field fixed by the kernel of $\bar{\rho}$. Note that $\op{det}\bar{\rho}$ is the mod-$7$ cyclotomic character, which we denote by $\bar{\chi}$. Therefore, the field $\Q(\bar{\rho})$ contains $\Q(\mu_7)$, which is the field fixed by the kernel of $\bar{\chi}$. Let $\ell\neq 7$ be a prime at which $E$ has good reduction, note that $\bar{\rho}$ is unramified at $\ell$. Let $\sigma_\ell\in \op{Gal}\left(\Q(\bar{\rho})/\Q\right)$ denote the Frobenius element at $\ell$, it is well known that the characteristic polynomial of $\bar{\rho}(\sigma_\ell)$ is given by
\[\op{det}\left(\op{Id}\cdot x-\bar{\rho}(\sigma_\ell)\right)=x^2-(\ell+1-\#E(\F_\ell))x+\ell.\]
Thus if $\ell$ is a prime such that \begin{equation}\label{mtx eqns} \op{det}\bar{\rho}(\sigma_\ell)=1\text{ and }\op{tr} \bar{\rho}(\sigma_\ell)\neq 2,\end{equation} then, $\ell$ splits in $\Q(\mu_7)$ and $E(\F_\ell)[7]= 0$. We show that there is a positive density set of primes $\ell$ satisfying the above conditions \eqref{mtx eqns}. According to \href{https://www.lmfdb.org/EllipticCurve/Q/11a2/}{the LMFDB Database} \cite{cremona2021functions}, the image of $\bar{\rho}$ contains $\op{SL}_2(\F_7)$. Thus there is $\sigma\in \op{Gal}(\Q(\bar{\rho})/\Q)$ such that $\bar{\rho}(\sigma)=\mtx{2}{1}{1}{1}$, and thus satisfies \eqref{mtx eqns}. According to the Chebotarev density theorem, there is a positive proportion of primes such that $\sigma_\ell=\sigma$, and thus, $\bar{\rho}(\sigma_\ell)=\mtx{2}{1}{1}{1}$. As a result, there is a positive density set of primes $\ell$ such that $q_1$ and $q_2$ are both zero, and this completes the proof.

\end{proof}
\subsection{The field generated by torsion points}
\par We come to the example in which $F_\infty$ is the field $\Q(E[p^\infty])$, i.e., the field generated by the $p$-primary torsion points of $E$. Assume that $E$ does not have complex multiplication. Then, by Serre's Open image theorem (see \cite[section 4, Theorem 3]{Serre72}), $G$ is a finite index subgroup of $\op{GL}_2(\Z_p)$ and it follows from this that the dimension of $G$ is $4$. In this setting, $F=\Q(E[p])$, and $F^{(n)}=\Q(E[p^{n+1}])$. We find that \[\op{rank} E(F_n)\leq \lambda_p(E/F)p^{3n}+q_1(p^{3n}-p^{2n}),\] where $q_1$ is simply the number of primes $\ell\neq p$ at which $E$ has split multiplicative reduction, and $q_2=0$. On the other hand, according to \cite[Remark after Theorem 3.2]{HL} the result of Hung-Lim \cite[Theorem 3.2]{HL} gives 
\[\op{rank} E(F_n)\leq (\lambda_p(E/F)+q_1)p^{3n}+8, \] when the whole dual Selmer group $\mathscr{X}(E/F_\infty)$ is finitely generated over $\Lambda(H)$.
\bibliographystyle{abbrv}
\bibliography{references}

\end{document}